\newtheorem{thm}{Theorem}[section]
\newtheorem{cor}[thm]{Corollary}
\newtheorem{lem}[thm]{Lemma}
\newtheorem{exam}[thm]{Example}
\numberwithin{equation}{section}
\begin{document}

\title{Cline's formula for g-Drazin inverses}

\author{Huanyin Chen}
\author{Marjan Sheibani$^*$}
\address{
Department of Mathematics\\ Hangzhou Normal University\\ Hang -zhou, China}
\email{<huanyinchen@aliyun.com>}
\address{Women's University of Semnan (Farzanegan), Semnan, Iran}
\email{<sheibani@fgusem.ac.ir>}

\thanks{$^*$Corresponding author}

\subjclass[2010]{15A09, 47A11, 47A53, 16U99.} \keywords{Cline's formula; Drazin inverse; generalized Drazin inverse; Common spectral properties.}

\begin{abstract}
Cline's formula for the well known generalized inverses such as Drazin inverse, generalized Drazin inverse is extended to the case when
$a(ba)^2=abaca=acaba=(ac)^2a$. Applications are given to some interesting Banach space operators.\end{abstract}

\maketitle

\section{Introduction}

Let $R$ be an associative ring with an identity. The commutant of $a\in R$ is defined by $comm(a)=\{x\in
R~|~xa=ax\}$. The double commutant of $a\in R$ is defined by $comm^2(a)=\{x\in R~|~xy=yx~\mbox{for all}~y\in comm(a)\}$.

An element $a\in R$ has Drazin inverse in case there exists $b\in R$ such that $$b=bab, b\in comm^2(a), a-a^2b\in R^{nil}.$$ The preceding $b$ is unique if exists, we denote it by $a^D$. Let $a,b\in R$. Then $ab$ has Drazin inverse if and only if $ba$ has Drazin inverse and $(ba)^{D}=b((ab)^{D})^2a$. This was known as Cline's formula for Drazin inverses (see \cite{CC}).

An element $a\in R$ has g-Drazin inverse (i.e., generalized Drazin inverse) in case there exists $b\in R$ such that $$b=bab, b\in comm^2(a), a-a^2b\in R^{qnil}.$$ The preceding $b$ is unique if exists, we denote it by $a^d$. Let $a,b\in R$. Then $ab$ has g-Drazin inverse if and only if $ba$ has g-Drazin inverse and $(ba)^{d}=b((ab)^{d})^2a$. This was known as Cline's formula for g-Drazin inverses (see \cite{C}).

Following Wang and Chen, an element $a$ in a ring $R$ has p-Drazin inverse if there exists $b\in comm^2(a)$ such that $b=b^2a, (a-a^2b)^k\in J(R)$. The p-Drazin inverse $b$ is also unique, and we denote it by $a^{pD}$ (see \cite{WC} ).

We shall extend Cline's formula for Drazin inverse, generalized Drazin inverse of $ba$ in a
ring when $ac$ has a corresponding inverse, $a(ba)^2=abaca=acaba=(ac)^2a$. This also recovers some recent results (see \cite{L}).

In Section 2, we  extend the Cline's formula for generalized inverse. We prove that for a ring $R$, if  $a(ba)^2=abaca=acaba=(ac)^2a$, for some $a, b, c\in R$ then,  $ac\in R^{d}$ if and only if $ba\in R^{d}$.

In Section 3, we generalized the Jacobson's Lemma and prove that if If $a(ba)^2=abaca=acaba=(ac)^2a$ in a ring $R$, then $$1-ac\in U(R)\Longleftrightarrow 1-ba\in U(R).$$ Also we study the common spectral properties of bounded linear operators.

Throughout the paper, all rings are associative with an identity. We use $R^{nil}, R^{qnil}$ and $R^{rad}$ to denote the set of all nilpotents, quasinilpotents and Jacobson radical of the ring $R$, respectively. $U(R)$ is the set of all units in $R$. $R^{D}$ and $R^{d}$ denote the sets of all elements in $R$ which have Drazin and g-Drazin inverses. ${\Bbb N}$ stands for the set of all natural numbers.

\section{Cline's Formula}

In \cite [Lemma 2.2]{L} proved that $ab\in R^{qnil}$ if and only if $ba\in R^{qnil}$ for any elements $a,b$ in a ring $R$. We extend this fact as follows.

\begin{lem} Let $R$ be a ring, and let $a,b,c\in R$. If $a(ba)^2=abaca=acaba=(ac)^2a$, then the following are equivalent:\end{lem}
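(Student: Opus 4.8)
The plan is to recast the hypothesis in a symmetric form, read off two degree-six identities from it, and then reduce the quasinilpotency equivalence entirely to the classical transfer $xy\in R^{qnil}\Leftrightarrow yx\in R^{qnil}$ recorded in \cite[Lemma 2.2]{L}, together with the power-stability of quasinilpotents. First I would observe that $a(ba)^2=ababa$ and $(ac)^2a=acaca$, so the hypothesis is nothing but the single chain $ababa=abaca=acaba=acaca$; I would write $w$ for this common element. In this form the data is manifestly symmetric under $b\leftrightarrow c$, and every alternating word $a\,?\,a\,?\,a$ with entries in $\{b,c\}$ collapses to $w$. Since $ca\in R^{qnil}\Leftrightarrow ac\in R^{qnil}$ and $ab\in R^{qnil}\Leftrightarrow ba\in R^{qnil}$ hold already by \cite[Lemma 2.2]{L}, the only substantive equivalence to prove is $ac\in R^{qnil}\Leftrightarrow ba\in R^{qnil}$, and all other items in the list follow from it.

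Next I would extract from $w$ the two identities $(ba)^3=bababa=b(ababa)=b(acaca)=b(ac)^2a$ and $ab(ac)^2=ab\,acac=(abaca)c=(acaca)c=(ac)^3$, each of which is a one-step consequence of $w$ obtained by a single outer multiplication. Setting $x=b(ac)^2$ and $y=a$, these identities say precisely that $(ba)^3=xy$ while $yx=ab(ac)^2=(ac)^3$, so $(ba)^3$ and $(ac)^3$ form a genuine Cline pair. Hence \cite[Lemma 2.2]{L} yields $(ba)^3\in R^{qnil}\Leftrightarrow(ac)^3\in R^{qnil}$ in a single stroke, and the power-stability $t\in R^{qnil}\Leftrightarrow t^k\in R^{qnil}$ upgrades this to $ba\in R^{qnil}\Leftrightarrow ac\in R^{qnil}$, as required.

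The real difficulty is conceptual rather than computational, and it lies in seeing that one must pass to the \emph{cube}. The defining relation $w$ has ``length five,'' so it is invisible at the level of the elements $ba$ and $ac$ themselves: any attempt to connect them by bare Cline rotations merely cycles through $ba\sim ab$ and $ac\sim ca$ and never bridges the two classes, and even the naive manipulation of $(ba)^3=b(ac)^2a$ returns circularly to $(ba)^3$ unless one spots the collapse $ab(ac)^2=(ac)^3$. Recognizing that $(ba)^3$ and $(ac)^3$ are exactly $xy$ and $yx$ for the single pair $(x,y)=\big(b(ac)^2,\,a\big)$ is therefore the crux. The one technical point requiring care is the appeal to power-stability of $R^{qnil}$: for the commutant-based definition in a general ring the delicate direction is $t\in R^{qnil}\Rightarrow t^k\in R^{qnil}$, since an element commuting with $t^k$ need not commute with $t$; I would either invoke this as the standard lemma it is in this setting or supply it from the factorization $1-(st)^k=(1-st)\sum_{i=0}^{k-1}(st)^i$ for the descent and its standard counterpart for the ascent.
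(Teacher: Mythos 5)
Your two structural identities are correct and genuinely attractive: from $ababa=acaca$ one gets $(ba)^3=b(ac)^2\cdot a$, and from $abaca=acaca$ one gets $a\cdot b(ac)^2=(ac)^3$, so with $x=b(ac)^2$, $y=a$ the cubes $(ba)^3=xy$ and $(ac)^3=yx$ do form a Cline pair, and \cite[Lemma 2.2]{L} transfers quasinilpotency between them. The gap is the step you yourself flag and then wave through: the ``power-stability'' $t\in R^{qnil}\Rightarrow t^k\in R^{qnil}$. For the commutant-based definition ($t\in R^{qnil}$ iff $1+st\in U(R)$ for all $s\in comm(t)$), the factorization $1-(st)^k=(1-st)\sum_{i=0}^{k-1}(st)^i$ you cite proves only the \emph{descent} $t^k\in R^{qnil}\Rightarrow t\in R^{qnil}$, because there $s$ is taken in $comm(t)$ and so $(st)^k=s^kt^k$ with $s^k\in comm(t^k)$. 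The \emph{ascent} is exactly the direction your chain needs in both implications of the lemma (you must get from $ac\in R^{qnil}$ up to $(ac)^3\in R^{qnil}$, resp.\ from $ba$ up to $(ba)^3$, before Cline and the easy descent take over), and for it one is handed $s\in comm(t^k)$ that need not commute with $t$, so no analogous factorization is available; the obvious Jacobson rotations only cycle $1-st^k$, $1-tst^{k-1}$, \dots, $1-t^ks$ among themselves. This ascent is not a standard lemma in a general ring (it is immediate in a Banach algebra via the spectral radius, which is perhaps why it feels routine), and nothing in your write-up establishes it. As it stands the proof is circular at its load-bearing joint.

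It is instructive to compare with the paper's own argument, which contains your cube idea but executes it without ever invoking power-stability. There one fixes $y\in comm(ba)$ at the outset and multiplies $1+yba$ by commuting cofactors to reach $1\pm y^{6}(ba)^{6}$; the crucial point is that the auxiliary element $y^{6}$ is known to commute with $ba$ itself, not merely with $(ba)^{6}$, and this extra commutation is precisely what lets the degree-six word be rewritten, via the hypothesis and Jacobson's lemma, in the form $1-(ac)w$ with $w(ac)=(ac)w$, so that $ac\in R^{qnil}$ applies directly. In other words, the information your black-box appeal to power-stability discards (membership of the test element in $comm(ba)$ rather than only in $comm((ba)^3)$) is the very thing that makes the argument close. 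To repair your proof you would need either a correct proof or reference for the ascent of $R^{qnil}$ under powers in an arbitrary ring, or else to carry the commutant datum through the computation as the paper does --- at which point the two proofs essentially coincide.
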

\begin{enumerate}
\item [(1)]{\it $ac\in R^{qnil}$.}
\vspace{-.5mm}
\item [(2)]{\it $ba\in R^{qnil}$.}
\end{enumerate}\begin{proof} $\Longrightarrow$ By hypothesis, $a(ba)^2=(ac)^2a$ and $a(ba)^3=(ac)^3a$. Suppose that $ac\in R^{qnil}$. Let $y\in comm(ba)$. Then
$(1+yba)(1-yba+y^baba)=1-y^3bababa$, and so $$\begin{array}{ll}
&(1+yba)(1-yba+y^baba)(1+y^3bababa)\\
=&1-y^6babababababa\\
=&1-y^6b(acaca)bababa\\
=&1-y^6b(acac(ababa)ba\\
=&1-y^6b(acac(acaca)ba.
\end{array}$$
In view of Jacobson's Lemma (see \cite [Theorem 2.2]{C}), we will suffice to prove
$$1-abay^6bacacac(ac)\in U(R).$$ As $ac\in R^{qnil}$, we will suffice to check
$$abay^6bacacac(ac)=(ac)abay^6bacacac.$$
One easily checks that $$\begin{array}{lll}
abay^6bacacac(ac)&=&abay^6b(acacac)ac\\
&=&ay^6bababababac;\\
(ac)abay^6bacacac&=&(ac)ababay^6cacac\\
&=&(acacaca)y^6cacac\\
&=&(abababa)y^6cacac\\
&=&ay^6bababababac.
\end{array}$$
Hence $1+yba\in U(R)$. This shows that $ba\in R^{qnil}.$

$\Longleftarrow$ If $ba\in R^{qnil}$, by the preceding discussion, we see that $ab\in R^{qnil}$. With the same argument as above we get $ca\in R^{qnil}$, and therefore $ac\in R^{qnil}$.\end{proof}

We come now to the main result of this paper.

\begin{thm} Let $R$ be a ring, and let $a,b,c\in R$. If $a(ba)^2=abaca=acaba=(ac)^2a$, then the following are equivalent:\end{thm}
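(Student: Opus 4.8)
The plan is to reduce everything to the single implication $ac\in R^{d}\Rightarrow ba\in R^{d}$ and to exhibit an explicit $g$-Drazin inverse. Writing $w=(ac)^{d}$, I would guess, in analogy with the classical Cline formula $(ba)^{d}=b((ab)^{d})^{2}a$, that
\[
(ba)^{d}=b\,w^{2}\,a .
\]
The converse implication then comes for free from symmetry, exactly as in the converse of Lemma 2.1: the hypothesis $a(ba)^{2}=abaca=acaba=(ac)^{2}a$ is invariant under interchanging $b$ and $c$, so the already-established implication, applied to the swapped triple, gives $ab\in R^{d}\Rightarrow ca\in R^{d}$; combining this with the classical Cline equivalences $ba\in R^{d}\Leftrightarrow ab\in R^{d}$ and $ca\in R^{d}\Leftrightarrow ac\in R^{d}$ closes the loop $ba\in R^{d}\Rightarrow ab\in R^{d}\Rightarrow ca\in R^{d}\Rightarrow ac\in R^{d}$.

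Before verifying the formula I would record the algebraic consequences of the hypothesis that make the computation run. First, an easy induction gives the one-sided intertwining $(ac)^{n}a=a(ba)^{n}$ for all $n\ge 2$. Second, multiplying the chain $a(ba)^{2}=abaca=acaba=(ac)^{2}a$ on the right by $c$ yields the ``trading'' identities $(ab)^{2}(ac)=(ac)(ab)(ac)=(ac)^{3}$, which are what allow an occurrence of $ab$ to be exchanged for $ac$. Together with the standard corner relations $(ac)w=w(ac)$, $(ac)w^{k}=w^{k-1}$ and $w(ac)w=w$ (so that $e:=(ac)w=w(ac)$ is the spectral idempotent, with $w^{2}e=w^{2}$), these give the crucial simplification
\[
(ab)^{2}w^{2}=(ab)^{2}(ac)w^{3}=(ac)^{3}w^{3}=e .
\]

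With $y=bw^{2}a$ in hand, the three defining properties of a $g$-Drazin inverse are checked as follows. The identity $y(ba)y=y$ is immediate, since $y(ba)y=b\,w^{2}(ab)^{2}w^{2}\,a=b\,w^{2}e\,a=b\,w^{2}a=y$. For the spectral condition, the same simplification gives $(ba)^{2}y=b(ab)^{2}w^{2}a=bea$, so that $ba-(ba)^{2}y=b(1-e)a$; I would then deduce $b(1-e)a\in R^{qnil}$ from the known quasinilpotency of $ac-(ac)^{2}w=(ac)(1-e)$ by transferring quasinilpotency through the equivalence ``$uv\in R^{qnil}\Leftrightarrow vu\in R^{qnil}$'' and the intertwining identities of Lemma 2.1, now applied in the compression by $e$. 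The remaining, and genuinely hardest, step is to show $y\in comm^{2}(ba)$; by the standard equivalence for the $g$-Drazin inverse it suffices to prove that $y$ commutes with $ba$, i.e. $b(ab)w^{2}a=bw^{2}(ab)a$.

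I expect this last commutation to be the main obstacle, and the reason is structural: in the classical case $c=b$ one has $w=(ab)^{d}\in comm^{2}(ab)$, so $ab$ commutes with $w$ outright and the computation is one line; here $w=(ac)^{d}$ need \emph{not} commute with the inserted factor $ab$. The commutation must therefore be forced from the hypothesis, using the trading identities in the forms $(ac)(ab)w^{2}=e$, $w(ab)w=w$, $e(ab)w^{2}=w=w^{2}(ab)e$, to push $ab$ past the powers of $w$ inside the sandwich $b(\cdot)a$, and then the passage from simple commutation to membership in the full double commutant $comm^{2}(ba)$ is handled by the general theory of the $g$-Drazin inverse. Once this commutation is secured, properties analogous to $y(ba)y=y$ and the quasinilpotent condition are routine, so essentially all of the work of the theorem is concentrated in this one point.
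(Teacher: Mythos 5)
Your overall strategy tracks the paper's quite closely: the candidate $(ba)^d=b((ac)^d)^2a$, the identity $ba-(ba)^2y=b(1-e)a$ with $e=(ac)(ac)^d$, the plan to get $b(1-e)a\in R^{qnil}$ by applying Lemma 2.1 to the triple $((1-e)a,b,c)$, and the $b\leftrightarrow c$ symmetry combined with classical Cline for the converse are all exactly what the paper does. Your preparatory identities $(ac)^na=a(ba)^n$ ($n\ge 2$), $(ab)^2(ac)=(ac)(ab)(ac)=(ac)^3$, and $(ab)^2w^2=e$ are correct, and the computations you actually carry out ($y(ba)y=y$ and $(ba)^2y=bea$) check out.

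The genuine gap is the double commutant step. You propose to verify only that $y$ commutes with $ba$ and then to upgrade to $y\in comm^2(ba)$ by ``the standard equivalence'' or ``the general theory of the g-Drazin inverse.'' No such upgrade is available in an arbitrary ring: the definition used here requires $y\in comm^2(ba)$ outright, and in the ring-theoretic setting (where $R^{qnil}$ is defined by $1+ax\in U(R)$ for all $x\in comm(a)$) it is precisely because a merely commuting candidate is not known to be double-commuting that $comm^2$ is built into the definition. The holomorphic functional calculus that performs this upgrade in a Banach algebra, and the finite-index argument that performs it for the classical Drazin inverse, are both unavailable here. Moreover, you leave even the weaker commutation $y(ba)=(ba)y$ unproved, explicitly flagging it as the main obstacle. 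The paper closes exactly this hole by a direct computation valid for an arbitrary $f\in comm(ba)$: it rewrites $fe=fb(ac)^4d^6a=b\bigl((ac)^3afc\bigr)d^6a$ (using $(ac)^3a=a(ba)^3$ to pull $f$ through), checks from the hypothesis that the manufactured element $(ac)^3afc$ commutes with $ac$, and only then invokes $d\in comm^2(ac)$ to slide $d^6$ past it and reassemble $ef$. That construction of an element of $comm(ac)$ out of an arbitrary element of $comm(ba)$ is the key idea your proposal is missing; without it the central claim $y\in comm^2(ba)$, and hence the theorem, remains unproved.
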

\begin{enumerate}
\item [(1)]{\it $ac\in R^{d}$.}
\vspace{-.5mm}
\item [(2)]{\it $ba\in R^{d}$.}
\end{enumerate}
In this case, $(ac)^{d}=a((ba)^{d})^2c$ and $(ba)^{d}=b((ac)^{d})^2a$.
\begin{proof}  Suppose that $ac$ has g-Drazin inverse and $(ac)^{d}=d$. Let $e=bd^2a$ and $f\in comm(ba)$. Then $$fe=fb((ac)^2d^3)^2a=fb(ac)^4d^6a=(ba)^4fcd^6a=b((ac)^3afc)d^6a.$$
Also we have
 $$\begin{array}{lll}
 ac((ac)^3afc)&=&(ac)^4afc=af(ba)^4c=af(ba)^3cac\\
&=&((ab)^3afc)ac=((ac)^3afc)ac.
 \end{array}$$
 Since $d\in comm^2(ac)$, we get $((ac)^3afc)d=d((ac)^3afc)$. Thus, we conclude that
 $$\begin{array}{lll}
 fe&=&b((ac)^3afc)d^6a=bd^6((ac)^3afc)a\\
 &=&bd^6(ab)^3afc=bd^6af(ba)^3ca\\
 &=&bd^6af(ba)^4=bd^6a(ba)^4f\\
 &=&bd^6a(ca)^4f=bd^2af=ef.
 \end{array}$$
 This implies that $e\in comm^2(ba)$.
 We have
 $$\begin{array}{lll}
 e(ba)e&=&bd^2a(ba)bd^2a=bd^2ababacd^3a\\
 &=&bd^2(ac)^3d^3a=bd^2a=e.
 \end{array}$$
 Let $p=1-acd$ then,
  $$pac=ac-acdac=ac-(ac)^2d$$
  that is contained in $R^{qnil}$. Moreover, we have
  $$\begin{array}{lll}
  ba-(ba)^2e&=&ba-bababd^2a=ba-bababacd^2da\\
  &=&ba-bacacacd^2da=b(1-acd)a=bpa.
  \end{array}$$
 One easily checks that
  $$\begin{array}{lll}
  abpabpa&=&ab(1-acd)ab(1-acd)a\\
  &=&ab(1-dac)aba(1-cda)\\
  &=&(ababa-abdacaba)(1-cda)\\
  &=&(abaca-abdacaca)(1-cda)\\
  &=&ab(1-dac)aca(1-cda)\\
  &=&ab(1-dac)ac(1-acd)a\\
  &=&abpacpa,
  \end{array}$$ and so
  $$(pa)b(pa)b(pa)=(pa)b(pa)c(pa).$$
  Likewise, we verify
  $$\begin{array}{c}
  (pa)b(pa)b(pa)=(pa)c(pa)b(pa)=(pa)c(pa)c(pa).
  \end{array}$$
  Then by Lemma 2.1., $bpa\in R^{qnil}$. Hence $ba$ has g-Drazin inverse $e$. That is,
  $e=bd^2a=(ba)^{d}.$ Moreover, we check $$\begin{array}{lll}
a((ba)^d)^2c&=&abd^2abd^2(ac)\\
&=&abd^3(acabac)d^2\\
&=&abd^3(acacac)d^2\\
&=&ab(acacac)d^5\\
&=&(ac)^4d^5\\
&=&(ac)^d,
\end{array}$$ as required.\end{proof}

\begin{cor} Let $R$ be a ring, let $k\in {\Bbb N}$, and let $a,b,c\in R$. If $a(ba)^2=abaca=acaba=(ac)^2a$, If $(ac)^k$ has g-Drazin inverse if and only if $(ba)^k$ has g-Drazin inverse.
\end{cor}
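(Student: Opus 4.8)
The plan is to deduce the corollary directly from Theorem~2.2 by choosing three new elements whose relevant products reproduce $(ac)^k$ and $(ba)^k$ while still satisfying the four-fold identity. For $k=1$ the claim is exactly Theorem~2.2, so I would assume $k\ge 2$ and set $A=a$, $B=(ba)^{k-1}b$ and $C=c(ac)^{k-1}$. Then $AC=a\,c(ac)^{k-1}=(ac)^k$ and $BA=(ba)^{k-1}b\,a=(ba)^k$, so once I check that $A,B,C$ satisfy $A(BA)^2=ABACA=ACABA=(AC)^2A$, Theorem~2.2 applied to $A,B,C$ yields $(ac)^k=AC\in R^d\Longleftrightarrow (ba)^k=BA\in R^d$, which is exactly what is wanted.

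The technical ingredient I would establish first is the identity $a(ba)^n=(ac)^n a$ for all $n\ge 2$, proved by induction on $n$. The base case $n=2$ is the hypothesis. For the step, I would write $a(ba)^{n+1}=\big(a(ba)^n\big)ba=(ac)^n(aba)$, peel off $(ac)^{n-1}$ on the left, and replace the remaining $acaba$ by $acaca$ using the relation $acaba=acaca$ contained in the hypothesis, obtaining $(ac)^n(aca)=(ac)^{n+1}a$. This is essentially the computation already used (for $n=2,3$) in the proof of Lemma~2.1.

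Armed with this identity, the verification of the four-fold equality for $A,B,C$ is mostly bookkeeping. The outer terms are immediate: $A(BA)^2=a(ba)^{2k}$ and $(AC)^2A=(ac)^{2k}a$ coincide by the identity with $n=2k$. For the middle terms I would collapse the alternating products: regrouping $a\,(ba)^{k-1}b\,a=a(ba)^k$ and $a\,c(ac)^{k-1}=(ac)^k$, and then substituting $a(ba)^k=(ac)^k a$ at the right place, both $ABACA$ and $ACABA$ reduce to the common value $(ac)^{2k}a$. Since $k\ge 2$, every exponent occurring is at least $2$, so the identity always applies.

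The step I expect to be the main obstacle is precisely this middle verification: one must keep track of how the interior factors $a$ interleave the blocks $(ba)^{k-1}b$ and $c(ac)^{k-1}$, so that $a(ba)^k$ and $(ac)^k$ appear in positions where the substitution $a(ba)^k=(ac)^k a$ is legitimate. Once that is handled, Theorem~2.2 applies verbatim to $A,B,C$, and its inverse formulas even give as a bonus $\big((ac)^k\big)^d=a\big(((ba)^k)^d\big)^2 c(ac)^{k-1}$ together with the symmetric expression for $\big((ba)^k\big)^d$.
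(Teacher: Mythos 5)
Your proposal is correct, and it partly coincides with and partly improves on the paper's argument. For $k=2$ your choice $A=a$, $B=(ba)^{k-1}b=bab$, $C=c(ac)^{k-1}=cac$ is exactly the substitution the paper makes in its Case 2. Where you diverge is $k\ge 3$: the paper abandons the substitution there and instead writes $(ac)^k=(ab)^{k-1}\cdot ac$ and invokes the classical two-element Cline formula to pass to $ac\cdot(ab)^{k-1}=(ab)^k$ (and then, implicitly, once more to get from $(ab)^k$ to $(ba)^k$ -- a step the paper does not spell out). Your uniform treatment is cleaner: the key identity $a(ba)^n=(ac)^n a$ for $n\ge 2$ is exactly the induction already implicit in the paper's Lemma 2.1, and with it all four products $A(BA)^2$, $ABACA$, $ACABA$, $(AC)^2A$ collapse to $(ac)^{2k}a$ as you say, so Theorem 2.2 applies in one stroke for every $k\ge 2$. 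Your route also buys something the paper's Case 3 does not deliver: the explicit inverse formulas $\bigl((ac)^k\bigr)^d=a\bigl(((ba)^k)^d\bigr)^2c(ac)^{k-1}$ and its symmetric companion, which come for free from the formulas in Theorem 2.2. The only point to be careful about, which you have already flagged and handled correctly, is that the substitution $a(ba)^k=(ac)^k a$ is only licensed for exponent at least $2$, which is why $k=1$ must be split off as the literal statement of Theorem 2.2.
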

\begin{proof} Case 1. $k=1$. This is obvious by Theorem 2.2.

Case 2. $k=2$. We easily check that $$\begin{array}{lll}
a(bab)a(bab)a&=a(bab)a(cac)a\\
&=a(cac)a(bab)a\\
&=a(cac)a(cac)a.
\end{array}$$ The result follows by Theorem 2.2.

Case 3. $k\geq 3$. Then $(ac)^{k}=(ab)^{k-1}ac$. Hence, $(ac)^k$ has g-Drazin inverse if and only if $(ab)^{k}=(ac)(ab)^{k-1}$ has g-Drazin inverse.
This completes the proof.\end{proof}

\begin{cor} Let $R$ be a ring, and let $a,b,c\in R$. If $aba=aca$, then $ac\in R^{d}$ if and only if $ba\in R^{d}$. In this case, $(ba)^dc=b(ac)^d$.\end{cor}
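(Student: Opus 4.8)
The plan is to deduce this from Theorem 2.2 by checking that the single relation $aba=aca$ already forces the four-fold identity $a(ba)^2=abaca=acaba=(ac)^2a$ that the theorem requires. Since $a(ba)^2=ababa$ and $(ac)^2a=acaca$, it suffices to verify $ababa=abaca=acaba=acaca$. I would do this by substituting $aba=aca$ into the appropriate sub-word of each length-five product: for instance $ababa=ab(aba)=ab(aca)=abaca$, then $abaca=(aba)ca=(aca)ca=acaca$, and similarly $acaba=ac(aba)=ac(aca)=acaca$. Chaining these equalities gives all four products equal, so the hypothesis of Theorem 2.2 is satisfied.

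With the hypothesis verified, Theorem 2.2 immediately yields the equivalence $ac\in R^{d}\Longleftrightarrow ba\in R^{d}$, together with the explicit formula $(ba)^{d}=b((ac)^{d})^2a$. It remains only to establish the relation $(ba)^{d}c=b(ac)^{d}$. Writing $d=(ac)^{d}$, I would compute $(ba)^{d}c=bd^2ac=bd^2(ac)$, so the claim reduces to showing $d^2(ac)=d$.

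This last identity is where the only real care is needed. Since $d=(ac)^{d}\in comm^2(ac)$ and $ac\in comm(ac)$, the inverse $d$ commutes with $ac$; combining this with the defining equation $d=d(ac)d$ of the g-Drazin inverse gives $d=d(ac)d=d^2(ac)$. Hence $(ba)^{d}c=bd^2(ac)=bd=b(ac)^{d}$, completing the proof. I expect no genuine obstacle here: the argument is a direct specialization of Theorem 2.2, and the only step requiring attention is the bookkeeping that turns the single relation $aba=aca$ into the full symmetric chain of identities.
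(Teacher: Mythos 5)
Your proposal is correct and follows essentially the same route as the paper: verify that $aba=aca$ forces the four-fold identity, invoke Theorem 2.2 to get the equivalence and the formula $(ba)^{d}=b((ac)^{d})^2a$, and then use $((ac)^{d})^2(ac)=(ac)^{d}$ to conclude $(ba)^{d}c=b(ac)^{d}$. The only difference is that you spell out the two routine verifications (the substitution chain and the identity $d^2(ac)=d$) that the paper leaves implicit.
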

\begin{proof} In view of Theorem 2.2., $ac\in R^{d}$ if and only if $ba\in R^{d}$. Moreover, $(ac)^{d}=a((ba)^{d})^2c$ and $(ba)^{d}=b((ac)^{d})^2a$.
Therefore $(ba)^dc=b((ac)^{d})^2ac=b(ac)^d$, as required.\end{proof}

\begin{lem} Let $R$ be a ring, and let $a\in R$. If $a\in R^{D}$, then $a\in R^{d}$ and $a^{D}=a^d$.\end{lem}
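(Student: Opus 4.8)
The plan is to reduce the statement to the elementary containment $R^{nil}\subseteq R^{qnil}$ together with the uniqueness of the g-Drazin inverse. Comparing the two definitions recorded in the introduction, the conditions defining the Drazin inverse and the g-Drazin inverse are word-for-word identical except for the final clause: the Drazin inverse requires $a-a^2b\in R^{nil}$, while the g-Drazin inverse only requires $a-a^2b\in R^{qnil}$. So once every nilpotent is known to be quasinilpotent, any Drazin inverse will automatically satisfy all three requirements of a g-Drazin inverse, and the two notions will agree by uniqueness.

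First I would establish $R^{nil}\subseteq R^{qnil}$. Recall that $w\in R^{qnil}$ means $1+yw\in U(R)$ for every $y\in comm(w)$. So take $w\in R^{nil}$ with $w^n=0$ and let $y\in comm(w)$. Since $y$ and $w$ commute, $(yw)^n=y^nw^n=0$, so $yw$ is nilpotent and $1+yw$ is a unit with inverse $\sum_{i=0}^{n-1}(-yw)^i$. Hence $w\in R^{qnil}$, proving the containment.

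Next I would assume $a\in R^{D}$ and set $b=a^{D}$. By definition $b=bab$, $b\in comm^2(a)$, and $a-a^2b\in R^{nil}$. Applying the containment just proved, $a-a^2b\in R^{qnil}$, so $b$ satisfies exactly the three defining conditions of a g-Drazin inverse of $a$; thus $a\in R^{d}$. Finally, since the g-Drazin inverse is unique whenever it exists, $b$ must coincide with $a^{d}$, which gives $a^{D}=b=a^{d}$. I do not expect any genuine obstacle here: the entire content is the observation $R^{nil}\subseteq R^{qnil}$, after which the result follows from a direct comparison of definitions and the uniqueness clause already stated in the introduction.
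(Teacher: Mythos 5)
Your proposal is correct and follows the same route as the paper, which simply remarks that the claim is immediate from the uniqueness of the g-Drazin inverse; you have just filled in the details (the containment $R^{nil}\subseteq R^{qnil}$ and the comparison of the defining conditions) that the paper leaves implicit.
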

\begin{proof} This is obvious as the g-Drazin inverse of $a$ is unique.\end{proof}

\begin{lem} Let $R$ be a ring, and let $a,b,c\in R$. If $a(ba)^2=abaca=acaba=(ac)^2a$, then $ac\in R^{nil}$ if and only if $ba\in R^{nil}$.\end{lem}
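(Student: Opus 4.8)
The plan is to reduce the whole equivalence to a single multiplicative identity, namely $a(ba)^n = (ac)^n a$ for every integer $n \ge 2$, after which both implications drop out in one line each. So the real content is extracting this power identity from the hypothesis $a(ba)^2=abaca=acaba=(ac)^2a$.

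First I would record the base case, which is free: $a(ba)^2=(ac)^2a$ is literally the equality of the first and last of the four assumed quantities. I would then prove $a(ba)^n=(ac)^na$ for all $n\ge 2$ by induction on $n$. For the inductive step, assuming $a(ba)^n=(ac)^na$, I multiply on the right by $ba$ to obtain $a(ba)^{n+1}=\big((ac)^na\big)ba=(ac)^n(aba)$, so the task becomes replacing the inner factor $aba$ by $aca$. Here the ``mixed'' part of the hypothesis is exactly what is needed: $acaba=acaca$ says $ac(aba)=ac(aca)$, and multiplying this identity on the left by $(ac)^{n-1}$ (legitimate since $n\ge 2$) yields $(ac)^n(aba)=(ac)^n(aca)=(ac)^{n+1}a$. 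Chaining these gives $a(ba)^{n+1}=(ac)^{n+1}a$, closing the induction.

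With the identity established, both directions are immediate. If $ba\in R^{nil}$, choose $m\ge 2$ with $(ba)^m=0$; then $(ac)^m a=a(ba)^m=0$, so $(ac)^{m+1}=\big((ac)^m a\big)c=0$ and hence $ac\in R^{nil}$. Conversely, if $ac\in R^{nil}$, choose $m\ge 2$ with $(ac)^m=0$; then $a(ba)^m=(ac)^m a=0$, so $(ba)^{m+1}=b\big(a(ba)^m\big)=0$ and hence $ba\in R^{nil}$.

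The main obstacle is the inductive step, and its success rests entirely on having the relation $acaba=acaca$ available to interchange $aba$ and $aca$ one factor deep inside a power of $ac$; without that intermediate equality the base relation $a(ba)^2=(ac)^2a$ alone would not propagate to higher powers. Everything surrounding this step is routine bookkeeping.
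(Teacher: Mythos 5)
Your proof is correct and takes essentially the same approach as the paper: both establish the transport identity $a(ba)^n=(ac)^na$ for large $n$ and then multiply by one extra factor of $c$ or $b$ to pass nilpotency across. The only cosmetic difference is that the paper iterates the single relation $a(ba)^2=(ac)^2a$ to get this identity for even $n$, while you run an induction that also invokes $acaba=(ac)^2a$ to cover every $n\ge 2$.
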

\begin{proof} $\Longrightarrow$  Let $ac\in R^{nil}$, then there exists some $n\in {\Bbb N}$ such that $(ac)^n=0$. We may assume that $n$ is even. Hence $(ac)^na=(ac)^{n-2}(ac)^2a=(ac)^{n-2}a(ba)^2=(ac)^{n-4}(ac)^2a(ba)^2=(ac)^{n-4}a(ba)^{4}$ $=\cdots =(ac)^2a(ba)^{n-2}=a(ba)^n=0$ and so $(ba)^{n+1}=0$.

$\Longleftarrow$ It can be proved in the similar way. \end{proof}

\begin{thm} Let $R$ be a ring, and let $a,b,c\in R$. If $a(ba)^2=abaca=acaba=(ac)^2a$, then $ac\in R^{D}$ if and only if $ba\in R^{D}$. In this case, we have
$$(ba)^D=b(((ac)^D)^2)a, (ac)^D=a(((ba)^D)^2)c.$$\end{thm}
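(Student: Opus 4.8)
The plan is to upgrade the g-Drazin inverse produced in Theorem 2.2 to a genuine Drazin inverse, the sole difference between the two notions being that the spectral remainder must lie in $R^{nil}$ rather than in $R^{qnil}$. So I would begin by assuming $ac\in R^{D}$. Lemma 2.5 then gives $ac\in R^{d}$ with $(ac)^{d}=(ac)^{D}=:d$, and in addition $ac-(ac)^2d\in R^{nil}$. Theorem 2.2 already delivers $ba\in R^{d}$ together with the candidate inverse $e:=bd^2a=b((ac)^{D})^2a$, and from its proof I can reuse verbatim the facts that $e\in comm^2(ba)$, that $e(ba)e=e$, and the identity $ba-(ba)^2e=bpa$, where $p=1-acd$. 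Since the double-commutant membership and the equation $e(ba)e=e$ are purely algebraic and do not see the nil/qnil distinction, the only remaining task is to prove $bpa\in R^{nil}$.

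For that step I would note that $pac=(1-acd)ac=ac-(ac)^2d$, which lies in $R^{nil}$ precisely because $ac\in R^{D}$. I then transfer nilpotency from $pac$ to $bpa$. The proof of Theorem 2.2 already verifies that the triple $(pa,b,c)$ satisfies $(pa)(b\,pa)^2=(pa)b(pa)c(pa)=(pa)c(pa)b(pa)=((pa)c)^2(pa)$, i.e.\ the standing hypothesis holds with $a$ replaced by $pa$. Hence Lemma 2.6, the nilpotent analogue of Lemma 2.1, applies to this triple and yields $pac\in R^{nil}\Longrightarrow bpa\in R^{nil}$. Therefore $ba-(ba)^2e\in R^{nil}$, and combining this with the algebraic properties of $e$ shows $ba\in R^{D}$ with $(ba)^{D}=e=b((ac)^{D})^2a$.

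It remains to establish the converse and the two explicit formulas. The formulas come for free: once both $ac$ and $ba$ possess Drazin inverses, Lemma 2.5 identifies these with the corresponding g-Drazin inverses, so the identities $(ba)^{d}=b((ac)^{d})^2a$ and $(ac)^{d}=a((ba)^{d})^2c$ of Theorem 2.2 become exactly $(ba)^{D}=b((ac)^{D})^2a$ and $(ac)^{D}=a((ba)^{D})^2c$. For the converse, rewriting the hypothesis as $ababa=abaca=acaba=acaca$ exhibits its symmetry under interchanging $b$ and $c$; applying the implication just proved to the triple $(a,c,b)$ gives $ab\in R^{D}\Longrightarrow ca\in R^{D}$, which the classical Cline's formula for Drazin inverses ($ab\in R^{D}\Longleftrightarrow ba\in R^{D}$ and $ca\in R^{D}\Longleftrightarrow ac\in R^{D}$) converts into $ba\in R^{D}\Longrightarrow ac\in R^{D}$.

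The main obstacle I anticipate is bookkeeping rather than any new idea: I must make sure the element whose nilpotency is transferred is exactly the one governing the Drazin remainder, that is, that $ba-(ba)^2e=bpa$ and $pac=ac-(ac)^2d$ line up with the triple $(pa,b,c)$ feeding Lemma 2.6, and that the Drazin and g-Drazin inverses of $ac$ genuinely coincide (Lemma 2.5) so that $d$ may be used interchangeably. Once these identifications are pinned down, the nil-transfer supplied by Lemma 2.6 does all of the substantive work.
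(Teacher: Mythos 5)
Your proposal is correct and follows essentially the same route as the paper: invoke Theorem 2.2 for the g-Drazin inverse $e=b((ac)^D)^2a$, set $p=1-(ac)(ac)^D$, observe that the triple $(pa,b,c)$ inherits the standing hypothesis so that Lemma 2.6 upgrades $pac=ac-(ac)^2(ac)^D\in R^{nil}$ to $bpa=ba-(ba)^2e\in R^{nil}$, and obtain the converse from the $b\leftrightarrow c$ symmetry together with the classical Cline formula for Drazin inverses. The only (harmless) difference is that you spell out the appeal to Lemma 2.5 to identify $(ac)^D$ with $(ac)^d$, which the paper leaves implicit.
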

\begin{proof} Suppose that $ac\in R^{D}$. Then $ac\in R^{d}$ by Lemma 2.1. In view of Theorem 2.2, we see that $ba\in R^{d}$, and
$(ba)^d=b((ac)^d)^2a$. Let $p=1-(ac)(ac)^d$. As in the proof of Theorem 2.2, we have
$$\begin{array}{c}
(pa)b(pa)b(pa)=(pa)b(pa)c(pa)=(pa)c(pa)b(pa)=(pa)c(pa)c(pa);\\
(pa)c=ac-(ac)^2(ac)^D\in R^{nil}.\end{array}$$
In light of Lemma 2.6, $bpa\in R^{nil}$. Therefore $$\begin{array}{lll}
ba-(ba)^2(ba)^d&=&ba-babab((ac)^d)^2a\\
&=&ba-bababac((ac)^d)^3a\\
&=&ba-bacacac((ac)^d)^3a\\
&=&ba-b(ac)(ac)^da\\
&=&bpa\in R^{nil}.
\end{array}$$ Therefore $ba\in R^{D}$ and $(ac)^D=a(((ba)^D)^2)c.$ Moreover,
$(ac)^D=a(((ba)^D)^2)c.$ Conversely if $ba\in R^{D}$, then by \cite[Theorem 2.1]{LC}, $ab\in R^{D}$. Withe the same argument we get $ca\in R^D$ and so $ac\in R^{D}$.\end{proof}

Recall that $a$ has the group inverse if $a$ has Drazin inverse with index $1$, and denote  the group inverse by $a^{\#}$. As an immediate consequence of Theorem 2.7., we now derive

\begin{cor} Let $R$ be a ring, and let $a,b,c\in R$. If $a(ba)^2=abaca=acaba=(ac)^2a$, then $ac$ has group if and only if\end{cor}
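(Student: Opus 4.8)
The plan is to read the group inverse as the Drazin inverse of index one and to rerun the argument of Theorem 2.7 with the nilpotent part forced to collapse. I expect the statement to assert that $ac$ has a group inverse if and only if $ba$ has a group inverse, and that in this case $(ba)^{\#}=b((ac)^{\#})^{2}a$ and $(ac)^{\#}=a((ba)^{\#})^{2}c$, in exact parallel with Theorem 2.7.

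First I would invoke Theorem 2.7 to get the Drazin part for free: since a group inverse is in particular a Drazin inverse, $ac\in R^{D}$ forces $ba\in R^{D}$ with $(ba)^{D}=b((ac)^{D})^{2}a$, and conversely; by Lemma 2.5 these Drazin inverses agree with the corresponding g-Drazin inverses. Thus the equivalence of Drazin invertibility costs nothing, and the entire content of the corollary reduces to a statement about the index, namely that $ba$ has index one precisely when $ac$ does.

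Second, I would carry over the machinery from the proof of Theorem 2.7. Writing $p=1-(ac)(ac)^{d}$, that proof already supplies the identity $ba-(ba)^{2}(ba)^{d}=bpa$ together with $(pa)c=ac-(ac)^{2}(ac)^{d}$ and the relations $(pa)b(pa)b(pa)=(pa)b(pa)c(pa)=(pa)c(pa)b(pa)=(pa)c(pa)c(pa)$. Saying that $ac$ has index one means exactly $ac-(ac)^{2}(ac)^{d}=0$, i.e. $(pa)c=0$, so I would feed this into Lemma 2.6 applied to the triple $(pa,b,c)$, which governs the nilpotency index of $bpa$ through that of $(pa)c$.

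The main obstacle is the last step: index one demands the exact equality $bpa=0$, whereas Lemma 2.6 only converts $((pa)c)^{n}=0$ into $(bpa)^{n+1}=0$, so from $(pa)c=0$ one obtains $(bpa)^{3}=0$, giving Drazin index at most three rather than one. Closing this gap is where the real work lies, and it is the step I would scrutinize hardest, since the passage from index one of $ac$ to index one of $ba$ is precisely the point at which a naive Cline argument can break down. To force $bpa=0$ I would try to exploit the full strength of $a(ba)^{2}=abaca=acaba=(ac)^{2}a$ directly: writing $g=(ac)(ac)^{d}$ for the idempotent attached to the group inverse, so that $ba=bga+bpa$, I would push $g$ through the word $ba$ using $(ac)=(ac)^{2}(ac)^{d}$ and the four equal products, hoping the relations pin $bpa$ to zero. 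This algebraic rearrangement, rather than any appeal to Theorem 2.7, is the crux, and it is the piece whose validity I would verify most carefully before committing to the biconditional and its two inverse formulas.
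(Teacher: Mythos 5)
You have reconstructed the wrong target statement, and the gap you honestly flag at the end is not closable. The corollary in the paper is not the symmetric biconditional ``$ac$ has a group inverse iff $ba$ has a group inverse with $(ac)^{\#}=a((ba)^{\#})^2c$''; it is the weaker (and awkwardly worded) disjunction whose real content is item (3): if $ac$ has a group inverse then $ba\in R^{D}$ and $(ba)^{D}=b((ac)^{\#})^2a$. The paper offers no proof at all beyond calling this an immediate consequence of Theorem 2.7, and indeed that is all that is needed: $(ac)^{\#}=(ac)^{D}$, so Theorem 2.7 hands you $ba\in R^{D}$ and the displayed formula, exactly as in your first two paragraphs. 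That part of your proposal is correct and is, in effect, the paper's entire argument.

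The biconditional you were aiming for is false, which is why your attempt to force $bpa=0$ had to stall. Take $c=b$; then $a(ba)^2=abaca=acaba=(ac)^2a$ holds automatically for any $a,b$, so the hypothesis is no restriction, and the claim would reduce to ``$ab$ has a group inverse iff $ba$ does,'' which fails already in $M_2(\Bbb C)$: with $a=\left(\begin{smallmatrix}0&1\\0&0\end{smallmatrix}\right)$ and $b=c=\left(\begin{smallmatrix}0&0\\0&1\end{smallmatrix}\right)$ one gets $ba=0$ (group invertible) while $ac=ab=a$ is a nonzero nilpotent of index $2$ (not group invertible). So the index really can jump under Cline's formula, your bound $(bpa)^3=0$ (hence index at most $3$ for $ba$) is essentially the best one can say in general, and no amount of pushing the idempotent $g=(ac)(ac)^{d}$ through the four equal words will yield $bpa=0$. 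Your instinct to scrutinize exactly that step was right; the correct conclusion of the scrutiny is that the step is impossible and the statement must be weakened to the one-directional Drazin conclusion (3), which Theorem 2.7 already gives.
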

\begin{enumerate}
\item [(1)]{\it $ba\in U(R)$; or}
\vspace{-.5mm}
\item [(2)]{\it $ba$ has group inverse and $(ba)^{\#}=b((ac)^{\#})^2a$; or}
\vspace{-.5mm}
\item [(3)]{\it $ba\in R^{D}$ and $(ba)^{D}=b((ac)^{\#})^2a$.}
\end{enumerate}

We note that if $aba=aca$ in a ring $R$ then $a(ba)^2=abaca=acaba=(ac)^2a$, but the converse is not true.

\begin{exam} Let $R=M_2({\Bbb Z}_2), x=\left(
\begin{array}{ccc}
0&1&0\\
0&0&1\\
0&0&0
\end{array}
\right)\in R$. Then $x^2\neq 0$ and $x^3=0$. Choose $$a=
\left(
\begin{array}{cc}
0&x\\
0&0
\end{array}
\right), b=\left(
\begin{array}{cc}
1&0\\
0&0
\end{array}
\right), c=\left(
\begin{array}{cc}
1&0\\
1&1
\end{array}
\right).$$ Then $a(ba)^2=abaca=acaba=(ac)^2a$, but $aba\neq aca$. In this case, $ac\in R^{D}$.\end{exam}

\section{Common spectral properties of bounded linear operators}

Let $A$ be a complex Banach algebra with unity $1$, and let $a\in A$. The Drazin spectrum $\sigma_D(a)$ and g-Drazin spectrum $\sigma_{d}(a)$ are defined by $$\begin{array}{c}
\sigma_D(a)=\{ \lambda\in {\Bbb C}~|~\lambda-a\not\in A^{D}\};\\
\sigma_{d}(a)=\{ \lambda\in {\Bbb C}~|~\lambda-a\not\in A^{d}\}.
\end{array}$$ Let $X$ be  Banach space, and let $L(X)$ denote the set of all bounded linear operators from Banach space to itself. The goal of this section is concern on common spectrum properties of $L(X)$. The following lemma is crucial.

\begin{lem} Let $R$ be a ring, and let $a,b,c\in R$. If $a(ba)^2=abaca=acaba=(ac)^2a$, then $$1-ac\in U(R)\Longleftrightarrow 1-ba\in U(R).$$
\end{lem}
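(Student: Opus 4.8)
The plan is to assume $1-ac\in U(R)$, put $w=(1-ac)^{-1}$, and build an explicit two-sided inverse of $1-ba$ out of $w$. The natural candidate, suggested both by the classical Jacobson formula and by the formula $(ba)^{d}=b((ac)^{d})^2a$ of Theorem 2.2, is $1+bwa$. Before computing, I would record the two facts that drive everything. From $w(1-ac)=(1-ac)w=1$ one gets $acwa=wa-a$ and $wac=w-1$. And the \emph{defect} element $\delta:=aba-aca$, which measures the failure of $aba=aca$, satisfies $\delta\,ba=\delta\,ca=0$ and $ac\,\delta=0$; each of these is nothing but one of the equalities among $ababa,abaca,acaba,acaca$ after cancelling a trailing or leading factor (for instance $\delta\,ba=ababa-acaba$ and $ac\,\delta=acaba-acaca$).

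Next I would compute $(1-ba)(1+bwa)=1+b(wa-abwa)-ba$, so the whole question reduces to evaluating $(1-ab)wa$. The key identity is $(ab-ac)wa=\delta$. To prove it, note $(ab-ac)a=\delta$, so it suffices to show $(ab-ac)(w-1)a=0$; writing $w-1=acw$ this becomes $((ab-ac)ac)\,wa=0$, and one checks directly that $(ab-ac)ac$ is annihilated on the right both by $a$ and by $ac$ (these are exactly $abaca=acaca$ and $abacac=acacac$), so that when $wa$ is expanded once via $wa=a+acwa$ the recursion terminates. Granting this, $abwa=acwa+(ab-ac)wa=(wa-a)+\delta$, hence $(1-ab)wa=a-\delta$ and therefore $(1-ba)(1+bwa)=1-b\delta$. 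A parallel computation on the other side, using instead $ac\,\delta=0$ to deduce $w\delta=\delta$ and then $wa(1-ba)=a-w\delta=a-\delta$, gives $(1+bwa)(1-ba)=1-b\delta$ as well.

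To finish I would show that $b\delta$ is nilpotent. Since $\delta\,ba=\delta\,ca=0$, we get $\delta b\delta=\delta(baba-baca)=\delta(ba)^2-\delta(ba)(ca)=0$, whence $(b\delta)^2=b(\delta b\delta)=0$; thus $1-b\delta\in U(R)$ with inverse $1+b\delta$. Combining the two displays, $(1-ba)(1+bwa)(1+b\delta)=(1-b\delta)(1+b\delta)=1$ and $(1+b\delta)(1+bwa)(1-ba)=1$, so $1-ba$ has both a right and a left inverse and is therefore a unit. For the reverse implication I would observe that the hypothesis $a(ba)^2=abaca=acaba=(ac)^2a$ is unchanged when $b$ and $c$ are interchanged (it merely permutes the four equal words), so the implication just proved, applied to the triple $(a,c,b)$, yields $1-ab\in U(R)\Rightarrow 1-ca\in U(R)$; two applications of the classical Jacobson lemma ($1-ba\leftrightarrow 1-ab$ and $1-ca\leftrightarrow 1-ac$) then close the loop.

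The main obstacle is conceptual rather than computational: $ba$ and $ac$ are not a Jacobson pair, so no single use of $1-xy\leftrightarrow 1-yx$ connects them, and the hypotheses intertwine only the higher powers, $a(ba)^k=(ac)^ka$ for $k\ge 2$, never for $k=1$. Consequently $1+bwa$ cannot be an exact inverse; the linear term is precisely what produces the defect $b\delta$, and the crux of the proof is the fact that this defect squares to zero. I would also flag that the tempting shortcut through cubes—using the genuine Jacobson pair $x=a,\ y=b(ac)^2$, for which $xy=(ac)^3$ and $yx=(ba)^3$—does not work, because passing from $1-ac\in U(R)$ to $1-(ac)^3\in U(R)$ is not valid in a general ring.
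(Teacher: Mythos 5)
Your proof is correct, and it takes a genuinely different route from the paper's. The paper guesses the exact inverse in one shot: with $s=(1-ac)^{-1}$ it verifies $\bigl((1+bsa)(1+ba)-bsa\bigr)(1-ba)=1$ directly, using only $ababa=acaba$ and $sac=s-1$, and then dismisses right-invertibility as ``likewise'' and the converse as ``symmetric''. You instead start from the naive Jacobson candidate $1+bwa$, isolate the defect $\delta=aba-aca$, prove $(1-ba)(1+bwa)=(1+bwa)(1-ba)=1-b\delta$ via the identity $(ab-ac)wa=\delta$, and then remove the error term by showing $(b\delta)^2=0$. I checked your intermediate identities ($\delta\,ba=\delta\,ca=ac\,\delta=0$, $(ab-ac)(ac)^2=0$, $w\delta=\delta$, $\delta b\delta=0$) and they all follow from the hypothesis exactly as you claim; your one-sided inverses $(1+bwa)(1+b\delta)$ and $(1+b\delta)(1+bwa)$ necessarily coincide with the paper's closed form $1+ba+bsaba$. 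What your route buys is conceptual clarity: it explains why the classical candidate fails only up to a square-zero correction, it treats both sides of the invertibility honestly rather than by assertion, and your handling of the converse (the $b\leftrightarrow c$ symmetry of the hypothesis composed with two uses of the classical Jacobson lemma) makes precise what the paper leaves at ``this is symmetric''. Your closing caveat that the cube trick $xy=(ac)^3$, $yx=(ba)^3$ cannot be used in a general ring is also well taken. The only cost relative to the paper is length.
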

\begin{proof} $\Longrightarrow$ Write $s(1-ac)=(1-ac)s=1$ for some $s\in R$. Then $sac=s-1$. We see that
$$\begin{array}{ll}
&\big((1+bsa)(1+ba)-bsa\big)(1-ba)\\
=&(1+bsa)(1-baba)-bsa(1-ba)\\
=&1-baba+bsa-bsababa-bsa(1-ba)\\
=&1-baba+bsa-bsacaba-bsa(1-ba)\\
=&1-baba+bsa-b(s-1)aba-bsa(1-ba)\\
=&1.
\end{array}$$ Thus, $1-ba\in R$ is left invertible. Likewise, we see that it is right invertible. Therefore $$(1-ba)^{-1}=\big(1+b(1-ac)^{-1}a\big)(1+ba)-b(1-ac)^{-1}a,$$ as asserted.

$\Longleftarrow$ This is symmetric.\end{proof}

\begin{thm} Let $A,B,C\in L(X)$ such that $A(BA)^2=ABACA=ACABA=(AC)^2A$, then $$\sigma_d(AC)=\sigma_d(BA).$$\end{thm}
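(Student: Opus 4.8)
The plan is to reduce the identity of the two g-Drazin spectra to the identity of the ordinary spectra away from the origin, together with the transfer of g-Drazin invertibility at the origin that Theorem 2.2 already supplies. The essential external input is Koliha's characterization of generalized Drazin invertibility in a Banach algebra: an operator $T\in L(X)$ lies in $L(X)^{d}$ if and only if $0$ is not an accumulation point of $\sigma(T)$, so that $\sigma_d(T)=\mbox{acc}\,\sigma(T)$, the set of accumulation points of the ordinary spectrum. Thus $\lambda\in\sigma_d(AC)$ exactly when $\lambda$ is an accumulation point of $\sigma(AC)$, and likewise for $BA$, and it therefore suffices to compare the two ordinary spectra.

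First I would record that the defining identity is stable under rescaling of $A$: replacing $(A,B,C)$ by $(\mu A,B,C)$ multiplies each of the four words $A(BA)^2$, $ABACA$, $ACABA$, $(AC)^2A$ by the common factor $\mu^{3}$, so the hypotheses of Lemma 3.1 and of Theorem 2.2 continue to hold for $(\mu A,B,C)$. Fixing $\lambda\neq 0$ and taking $\mu=\lambda^{-1}$, I would apply Lemma 3.1 to $a=\lambda^{-1}A$, $b=B$, $c=C$: since $\lambda-AC$ is invertible iff $1-(\lambda^{-1}A)C$ is, and $\lambda-BA$ is invertible iff $1-B(\lambda^{-1}A)$ is, Lemma 3.1 yields
$$\lambda-AC\in U(L(X))\ \Longleftrightarrow\ \lambda-BA\in U(L(X)).$$
Consequently $\sigma(AC)\setminus\{0\}=\sigma(BA)\setminus\{0\}$. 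As this equality holds throughout a punctured neighbourhood of every nonzero point, the two spectra have exactly the same accumulation points in ${\Bbb C}\setminus\{0\}$, so $\sigma_d(AC)$ and $\sigma_d(BA)$ agree off the origin.

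It remains to decide membership of $0$, and here I would invoke Theorem 2.2 directly: $AC\in L(X)^{d}$ if and only if $BA\in L(X)^{d}$, which is precisely the statement $0\notin\sigma_d(AC)\Leftrightarrow 0\notin\sigma_d(BA)$. Combining this with the previous paragraph gives $\sigma_d(AC)=\sigma_d(BA)$.

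The main obstacle, and the only point demanding genuine care, is the passage from ``the spectra coincide away from $0$'' to a statement about the generalized Drazin spectra: g-Drazin invertibility of $\lambda-AC$ depends on $\sigma(AC)$ near $\lambda$ only through Koliha's accumulation-point description, and the origin must be handled separately because the punctured-neighbourhood argument says nothing about whether $0$ is isolated in, or absent from, each spectrum. That is exactly the gap Theorem 2.2 closes. If one wished to avoid Koliha's theorem, the same conclusion could be reached by first proving a ``$1-ac$'' analogue of Theorem 2.2, namely $1-(\lambda^{-1}A)C\in L(X)^{d}\Leftrightarrow 1-B(\lambda^{-1}A)\in L(X)^{d}$, via the same quasinilpotent-plus-idempotent splitting used in the proof of Theorem 2.2; but the accumulation-point route is the shorter one.
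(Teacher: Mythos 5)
Your proposal is correct and follows essentially the same route as the paper: the origin is handled by Theorem 2.2, and nonzero points are handled by combining Koliha's accumulation-point description of $\sigma_d$ with Lemma 3.1 applied to the rescaled triple $(\lambda^{-1}A,B,C)$. Your explicit check that the defining identity is homogeneous of degree $3$ in $A$, hence stable under the rescaling, is a detail the paper's proof uses implicitly but does not verify.
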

\begin{proof} Case 1. $0\in \sigma_d(AC)$. Then $AC\not\in A^{d}$. In view of Theorem 2.2., $BA\not\in A^{d}$. Thus $0\in \sigma_d(BA)$.

Case 2. $0\not\in \lambda\in\sigma_d(AC)$. Then $\lambda\in acc\sigma(AC)$. Thus, we see that
$$\lambda=\lim\limits_{n\to \infty}\{ \lambda_n ~|~ \lambda_n I-AC\not\in L(X)^{-1}\}.$$
For $\lambda_n\neq 0$, it follows by Lemma 3.1 that $I-(\frac{1}{\lambda_n} A)C\in L(X)^{-1}$ if and only if $I-B(\frac{1}{\lambda_n} A)\in L(X)^{-1}$. Therefore
$$\lambda=\lim\limits_{n\to \infty}\{ \lambda_n ~|~ \lambda_n I-BA\not\in L(X)^{-1}\}\in acc\sigma(BA)=\sigma_d(BA).$$
Therefore $\sigma_d(AC)\subseteq \sigma_d(BA).$ Likewise, $\sigma_d(BA)\subseteq \sigma_d(AC)$, as required.\end{proof}

\begin{cor} Let $A,B,C\in L(X)$ such that $ABA=ACA$, then $$\sigma_d(AC)=\sigma_d(BA).$$\end{cor}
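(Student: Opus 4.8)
The plan is to recognize this corollary as a special case of Theorem 3.2 rather than to prove it from scratch: the two-term identity $ABA=ACA$ already forces the four-term identity $A(BA)^2=ABACA=ACABA=(AC)^2A$ that serves as the standing hypothesis of Theorem 3.2, after which the conclusion is immediate. This reduction is exactly the content of the remark recorded just before Example 2.9, so the corollary really only asks us to make that remark explicit in the operator setting.

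Accordingly, the single step that requires checking is the algebraic implication $ABA=ACA \Longrightarrow A(BA)^2=ABACA=ACABA=(AC)^2A$, which I would verify by repeated substitution of $ABA$ for $ACA$ and vice versa. Concretely, $A(BA)^2=ABABA=(AB)(ABA)=(AB)(ACA)=ABACA$; then $ABACA=(ABA)CA=(ACA)CA=(AC)^2A$; and symmetrically $ACABA=(ACA)BA=(ABA)BA=ABABA=A(BA)^2$. Chaining these three equalities shows that all four expressions coincide, so the operators $A,B,C\in L(X)$ satisfy the hypothesis of Theorem 3.2. I would then simply invoke Theorem 3.2 to conclude $\sigma_d(AC)=\sigma_d(BA)$.

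As for the main obstacle, there is essentially none: the entire argument is bookkeeping of substitutions, and no analytic input about the Banach space $X$ or the structure of the g-Drazin spectrum is needed beyond what Theorem 3.2 already supplies. The one point worth a moment of care is to ensure that every one of the four expressions is actually linked in the chain of equalities, since Theorem 3.2 requires the full four-term identity and not merely an isolated coincidence such as $A(BA)^2=(AC)^2A$; once that is confirmed, the corollary follows at once.
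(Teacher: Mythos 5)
Your proposal is correct and matches the paper's route exactly: the paper also derives this corollary by noting (in the remark before Example 2.9) that $aba=aca$ implies the four-term identity, and then simply cites Theorem 3.2. Your explicit verification of the three substitution chains is a sound and slightly more detailed version of the same argument.
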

\begin{proof} This is obvious by Theorem 3.2.\end{proof}

\begin{exam} Let $A,B,C$ be operators, acting on separable Hilbert space $l_2({\Bbb N})$, defined as follows respectively:
$$\begin{array}{lll}
A(x_1,x_2,x_3,x_4,\cdots )&=&(0,x_2,0,x_4,\cdots ),\\
B(x_1,x_2,x_3,x_4,\cdots )&=&(0,x_1,x_2,x_4,\cdots ),\\
C(x_1,x_2,x_3,x_4,\cdots )&=&(0,0,x_1,x_4,\cdots ).
\end{array}$$ Then $ABA=ACA$, and so $\sigma_d(AC)=\sigma_d(BA)$ by Corollary 3.3.\end{exam}

For the Drazin spectrum $\sigma_D(a)$, we now derive

\begin{thm} Let $A,B,C\in L(X)$ such that $A(BA)^2=ABACA=ACABA=(AC)^2A$, then $$\sigma_D(AC)=\sigma_D(BA).$$\end{thm}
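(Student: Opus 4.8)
The plan is to prove the pointwise equivalence $\lambda-AC\in L(X)^{D}\Longleftrightarrow\lambda-BA\in L(X)^{D}$ for every $\lambda\in{\Bbb C}$, which is exactly the assertion $\sigma_D(AC)=\sigma_D(BA)$. I would follow the two-case pattern of Theorem 3.2, but replace the description $\sigma_d(T)=acc\,\sigma(T)$ by the pole description of the Drazin spectrum: $T-\lambda$ is Drazin invertible precisely when $\lambda$ is a pole of the resolvent of $T$ (or lies in the resolvent set), so that $\sigma_D(T)=\sigma(T)\setminus\Pi(T)$, where $\Pi(T)$ denotes the poles.

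The case $\lambda=0$ is immediate from the main result: $0\in\sigma_D(AC)$ says $AC\notin L(X)^{D}$, and Theorem 2.7 gives $AC\in L(X)^{D}\Longleftrightarrow BA\in L(X)^{D}$, whence $0\in\sigma_D(AC)\Longleftrightarrow 0\in\sigma_D(BA)$.

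For $\lambda\neq 0$ I would first record that the standing identities are invariant under the rescaling $A\mapsto\lambda^{-1}A$: with $A'=\lambda^{-1}A$, each of $A(BA)^2,\,ABACA,\,ACABA,\,(AC)^2A$ is multiplied by $\lambda^{-3}$, so $A',B,C$ satisfy the same hypothesis and the problem reduces to $\lambda=1$, i.e. to showing $1-AC\in L(X)^{D}\Longleftrightarrow 1-BA\in L(X)^{D}$. Lemma 3.1 already matches invertibility (hence the nonzero parts of $\sigma(AC)$ and $\sigma(BA)$, and by Theorem 3.2 their accumulation points), so the only remaining task is to transfer the pole property at such $\lambda$. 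I would attempt this by adapting the idempotent technique of Theorem 2.7: using the spectral decomposition of $1-AC$ furnished by its Drazin inverse (the complementary idempotent $p=1-(1-AC)(1-AC)^{D}$), build a candidate Drazin inverse of $1-BA$, and reduce the nilpotency of the resulting error term, via a substitution analogous to $a\rightsquigarrow pa$, to an application of Lemma 2.6.

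The hard part is exactly this last transfer for $\lambda\neq 0$. Unlike the g-Drazin spectrum, which equals $acc\,\sigma$ and therefore depends only on the spectrum as a set, the Drazin spectrum distinguishes poles from isolated essential singularities, so the set-equality $\sigma(AC)\setminus\{0\}=\sigma(BA)\setminus\{0\}$ furnished by Lemma 3.1 is not by itself enough; one must genuinely match the finite-index (pole-order) structure. Moreover $1-AC$ and $1-BA$ are no longer products of the Cline form, so Theorem 2.7 and Lemma 2.6 do not apply verbatim and the nilpotency-transfer step must be re-derived for the shifted elements. I expect that carrying out this Drazin analogue of Jacobson's lemma — equivalently, proving that the quasinilpotent part of $\lambda-AC$ is nilpotent if and only if that of $\lambda-BA$ is — will be the main obstacle, with Lemma 2.6 the essential tool once the computation is set up correctly.
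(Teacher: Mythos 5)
Your proposal does not actually prove the theorem: the case $\lambda\neq 0$, which you yourself identify as ``the hard part,'' is left as a plan rather than carried out. What you have established is correct --- the case $\lambda=0$ follows from Theorem 2.7 exactly as in the paper, the rescaling $A\mapsto\lambda^{-1}A$ does preserve all four identities (each side picks up a factor $\lambda^{-3}$) and legitimately reduces the problem to deciding whether $1-AC\in L(X)^{D}\Longleftrightarrow 1-BA\in L(X)^{D}$, and you are right that Lemma 3.1 and Theorem 3.2 only match the spectra and their accumulation points, not the pole structure. But the sentence ``I would attempt this by adapting the idempotent technique of Theorem 2.7\dots and reduce the nilpotency of the resulting error term\dots to an application of Lemma 2.6'' is not a proof; it is a statement of intent, and it is not clear it can be executed as described, since Lemma 2.6 transfers nilpotency between $ac$ and $ba$ under the standing identities, whereas here you would need to transfer nilpotency (or the pole property) between the \emph{shifted} elements $1-AC$ and $1-BA$, which are not of Cline product form. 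The Drazin analogue of Jacobson's lemma that you need at this point is a genuine theorem requiring its own argument.

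For comparison, the paper disposes of exactly this step by citation: after invoking Theorem 2.7 for the point $0$, it appeals to \cite[Theorem 3.1]{Y} (Yang and Fang), which supplies the equality of the Drazin spectra away from the origin. So the overall architecture of your argument (split off $\lambda=0$, rescale, transfer Drazin invertibility of $1-AC$ to $1-BA$) is the right one and matches the paper's in spirit, but the substantive content of the nonzero case --- the one thing that distinguishes this theorem from Theorem 3.2 --- is missing from your write-up. To complete it you would either have to cite a result of Yang--Fang/Zeng--Zhong type covering these weaker identities, or prove directly that $1-AC$ Drazin invertible implies $1-BA$ Drazin invertible, e.g.\ by exhibiting an explicit candidate inverse in the style of Lemma 3.1 and verifying the finite-index condition.
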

\begin{proof} In view of Theorem 2.7, $AC\in L(X)^{D}$ if and only if $BA\in L(X)^{D}$, and therefore we complete the proof by~\cite[Theorem 3.1]{Y}.\end{proof}

A bounded linear operator $T\in L(X)$ is Fredholm operator if $dimN(T)$ and $codimR(T)$ are finite, where $N(T)$ and $R(T)$ are the null space and the range of $T$ respectively. If furthermore the Fredholm index $ind(T)=0$, then $T$ is said to be Weyl operator. For each nonnegative integer $n$ define $T_{|n|}$ to be the restriction of $T$ to $R(T^n)$. If for some $n$, $R(T^n)$ is closed and $T_{|n|}$ is a Fredholm operator then $T$ is called a $B$-Fredholm operator. $T$ is said to be a $B$-Weyl operator if $T_{|n|}$ is a Fredholm operator of index zero (see \cite{Ba}). The $B$-Fredholm and $B$-Weyl spectrums of $T$ are defined by
$$\begin{array}{c}
\sigma_{BF}(T)=\{ \lambda\in {\Bbb C}~|~T-\lambda I~\mbox{is not a}$ $B-\mbox{Fredholm operator}\};\\
\sigma_{BW}(T)=\{ \lambda\in {\Bbb C}~|~T-\lambda I~\mbox{is not a}$ $B-\mbox{Weyl operator}\}.
\end{array}$$

\begin{cor} Let $A,B,C\in L(X)$ such that $A(BA)^2=ABACA$ $=ACABA=(AC)^2A$, then $$\sigma_{BF}(AC)=\sigma_{BF}(BA).$$\end{cor}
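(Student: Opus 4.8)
The plan is to pass to the Calkin algebra and convert the assertion into the Drazin-spectrum identity of Theorem~3.5. Write $\pi\colon L(X)\to L(X)/K(X)$ for the canonical homomorphism onto the Calkin algebra, where $K(X)$ denotes the ideal of compact operators. The tool I would rely on is the characterization of $B$-Fredholm operators in terms of Drazin invertibility (see \cite{Ba}): an operator $T\in L(X)$ is $B$-Fredholm if and only if $\pi(T)$ is Drazin invertible in $L(X)/K(X)$. Applied to each translate $T-\lambda I$, this says precisely that $\sigma_{BF}(T)=\sigma_D(\pi(T))$, the Drazin spectrum being computed in the Calkin algebra.

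First I would push the hypothesis through $\pi$. Since $\pi$ is a unital ring homomorphism, setting $a=\pi(A)$, $b=\pi(B)$, $c=\pi(C)$ turns $A(BA)^2=ABACA=ACABA=(AC)^2A$ into $a(ba)^2=abaca=acaba=(ac)^2a$ in $L(X)/K(X)$, while $\pi(AC)=ac$ and $\pi(BA)=ba$. Thus the problem is reduced to proving $\sigma_D(ac)=\sigma_D(ba)$ for elements $a,b,c$ of the Banach algebra $L(X)/K(X)$ satisfying the standing relation.

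Next I would apply Theorem~3.5 inside the Calkin algebra. Its proof is not specific to $L(X)$: it uses only Theorem~2.7, which holds in any ring, together with the spectral description of the Drazin spectrum in \cite[Theorem 3.1]{Y}, a statement valid in an arbitrary complex Banach algebra. Since $L(X)/K(X)$ is a complex Banach algebra, this yields $\sigma_D(ac)=\sigma_D(ba)$, and combining with the characterization above gives
$$\sigma_{BF}(AC)=\sigma_D(ac)=\sigma_D(ba)=\sigma_{BF}(BA),$$
as desired.

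The step I expect to be the main obstacle is exactly this transfer: one must verify that every ingredient behind the Drazin-spectrum equality is genuinely a Banach-algebra statement rather than one using structure special to $L(X)$ (closedness of ranges, index theory, and the like). If \cite[Theorem 3.1]{Y} were available only for operators on $X$, the fallback is to reprove the equality directly in $L(X)/K(X)$ by imitating the accumulation-point argument of Theorem~3.2: for $\lambda\neq 0$ one matches invertibility of $\lambda-ac$ with that of $\lambda-ba$ through Lemma~3.1 applied to $\tfrac{1}{\lambda}a,b,c$, while the point $\lambda=0$ is governed directly by the Drazin-invertibility equivalence of Theorem~2.7, the relevant pole data being read off from the common nilpotent parts produced there.
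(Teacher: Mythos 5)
Your overall strategy --- push the hypothesis through a quotient homomorphism and invoke the Drazin-spectrum identity in the quotient --- is exactly the paper's, but you quotient by the wrong ideal, and this breaks the one equivalence your whole argument rests on. The characterization of $B$-Fredholm operators via Drazin invertibility (due to Berkani, not to \cite{Ba}) reads: $T$ is $B$-Fredholm if and only if $\pi(T)$ is Drazin invertible in $L(X)/F(X)$, where $F(X)$ is the ideal of \emph{finite rank} operators; that is the quotient the paper uses. With the Calkin algebra $L(X)/K(X)$ only the implication ``$T$ is $B$-Fredholm $\Rightarrow$ $\pi(T)$ is Drazin invertible'' survives. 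For a concrete failure of the converse, take $T$ to be the Volterra operator on $L^2[0,1]$: it is compact, so $\pi(T)=0$ is trivially Drazin invertible in the Calkin algebra, yet $T$ is not $B$-Fredholm, since $T$ itself is not Fredholm and $R(T^n)$ is dense but never closed for $n\geq 1$. (Operators whose image in the Calkin algebra is Drazin invertible form the strictly larger class of pseudo $B$-Fredholm operators.) Hence your identity $\sigma_{BF}(T)=\sigma_D(\pi(T))$ is false in general --- one only has $\sigma_D(\pi(T))\subseteq\sigma_{BF}(T)$ when $\pi$ is the Calkin projection --- and your argument establishes equality of the wrong spectra.

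The repair is simply to replace $K(X)$ by $F(X)$ throughout, which is what the paper does. Your secondary concern --- whether the Drazin-spectrum equality of Theorem 3.5 really transfers to the quotient --- is well taken, and in fact more delicate than you indicate: $F(X)$ is not closed, so $L(X)/F(X)$ is not a Banach algebra and \cite[Theorem 3.1]{Y} cannot be quoted verbatim there. What is actually needed is the pointwise statement that $\lambda-\pi(AC)$ is Drazin invertible if and only if $\lambda-\pi(BA)$ is, which for $\lambda=0$ is the ring-theoretic Theorem 2.7 and for $\lambda\neq 0$ is a Jacobson-lemma-type statement for Drazin inverses. The paper glosses over this point as well, so that part is a shared imprecision rather than a defect specific to your write-up; the choice of ideal, however, is a genuine gap.
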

\begin{proof} Let $\pi: L(X)\to L(X)/F(X)$ be the canonical map and $F(X)$ be the ideal of finite rank operators in $L(X)$. As in well known, $T\in L(X)$ is $B$-Fredholm if and only if $\pi(T)$ had Drazin inverse. By hypothesis, we see that $$\begin{array}{lll}
\pi(A)(\pi(B)\pi(A))^2&=&\pi(A)\pi(B)\pi(A)\pi(C)\pi(A)\\
&=&\pi(A)\pi(C)\pi(A)\pi(B)\pi(A)\\
&=&(\pi(A)\pi(C))^2\pi(A).
\end{array}$$ According to Theorem 3.5., for every scalar $\lambda$, we have
$$\lambda I-\pi(AC)~\mbox{has Drazin inverse} ~\Longrightarrow \lambda I-\pi(BD)~\mbox{has Drazin inverse}.$$ This completes the proof.\end{proof}

\begin{cor} Let $A,B,C\in L(X)$ such that $A(BA)^2=ABACA=ACABA=(AC)^2A$, then $$\sigma_{BW}(AC)=\sigma_{BW}(BA).$$\end{cor}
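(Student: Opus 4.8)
The plan is to mirror the structure of Corollary 3.6 almost verbatim, replacing Drazin invertibility of the Calkin-algebra image by Weyl-type (index-zero $B$-Fredholm) data. First I would recall the operator-theoretic characterization: $T\in L(X)$ is $B$-Weyl if and only if $\pi(T)$ has Drazin inverse in the Calkin-type quotient $L(X)/F(X)$ \emph{together with} the index-zero condition, i.e.\ one uses the result (as in Corollary 3.6, attributable to the reference for $B$-Fredholm operators) that $B$-Weyl-ness is detected by Drazin invertibility of the image modulo the finite-rank ideal. Thus the spectral identity $\sigma_{BW}(AC)=\sigma_{BW}(BA)$ should reduce to an equality of Drazin spectra in the quotient algebra.

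Next I would pass to the quotient map $\pi\colon L(X)\to L(X)/F(X)$ and observe, exactly as displayed in the proof of Corollary 3.6, that the defining relation is preserved by the ring homomorphism $\pi$: from $A(BA)^2=ABACA=ACABA=(AC)^2A$ one obtains
\begin{equation*}
\pi(A)(\pi(B)\pi(A))^2=\pi(A)\pi(B)\pi(A)\pi(C)\pi(A)=\pi(A)\pi(C)\pi(A)\pi(B)\pi(A)=(\pi(A)\pi(C))^2\pi(A).
\end{equation*}
Hence $\pi(A),\pi(B),\pi(C)$ satisfy the hypothesis of Theorem 3.5 in the quotient ring, so that $\lambda I-\pi(AC)\in (L(X)/F(X))^{D}$ if and only if $\lambda I-\pi(BA)\in (L(X)/F(X))^{D}$ for every scalar $\lambda$; equivalently, by Theorem 3.5 applied in the quotient, $\sigma_D(\pi(AC))=\sigma_D(\pi(BA))$.

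Finally I would translate this equality back through the $B$-Weyl characterization: since $\lambda\notin\sigma_{BW}(T)$ is equivalent to $\lambda I-\pi(T)$ being Drazin invertible in the appropriate quotient (the index-zero refinement being built into the choice of ideal/quotient used for $B$-Weyl operators), the equality of Drazin spectra of the images yields $\sigma_{BW}(AC)=\sigma_{BW}(BA)$ directly. The main obstacle I anticipate is purely in the bookkeeping of which quotient correctly encodes the \emph{index-zero} condition: for $B$-Fredholm one uses the finite-rank (or compact) ideal, but capturing Weyl index zero requires the characterization via Drazin invertibility modulo the ideal together with the vanishing of a Fredholm index, so I would need to invoke the same external result (the $B$-Weyl analogue of \cite[Theorem 3.1]{Y} or the characterization in \cite{Ba}) that guarantees $B$-Weyl-ness is a Drazin-spectral condition stable under the symmetry established by Theorem 3.5. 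Once that characterization is cited, the argument is identical in form to Corollary 3.6 and requires no further computation.
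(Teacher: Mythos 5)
Your plan reduces everything to the claim that $B$-Weyl-ness of $T$ is equivalent to Drazin invertibility of $\pi(T)$ in a suitably chosen quotient of $L(X)$, with the index-zero condition ``built into the choice of ideal/quotient.'' That is the gap, and you correctly flag it as the main obstacle but then leave it unresolved: no quotient algebra of $L(X)$ sees the Fredholm index, so there is no analogue of the characterization ``$T$ is $B$-Fredholm if and only if $\pi(T)$ has a Drazin inverse in $L(X)/F(X)$'' with ``$B$-Fredholm'' replaced by ``$B$-Weyl.'' Drazin invertibility of $\pi(\lambda I-T)$ detects exactly $\sigma_{BF}$, which is Corollary 3.6 and nothing more; the index-zero requirement is additional data that must be tracked by a separate argument, not by redefining the quotient.

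The paper closes this gap with a perturbation-plus-index argument rather than a quotient argument: if $T$ is $B$-Fredholm, then for $\lambda\neq 0$ small enough $T-\lambda I$ is Fredholm and $\mathrm{ind}(T)=\mathrm{ind}(T-\lambda I)$; and, following the arguments of \cite[Lemma 2.3, Lemma 2.4]{Y}, under the hypothesis $A(BA)^2=ABACA=ACABA=(AC)^2A$ the operator $I-AC$ is Fredholm if and only if $I-BA$ is, and in that case $\mathrm{ind}(I-AC)=\mathrm{ind}(I-BA)$. Combining this index equality with Corollary 3.6 transfers the index-zero condition and yields $\sigma_{BW}(AC)=\sigma_{BW}(BA)$. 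To repair your proof you would need to supply exactly this Fredholm-index comparison (or an equivalent statement from \cite{Ba} or \cite{Y}); deferring to an unspecified ``$B$-Weyl analogue'' is not enough, since establishing that analogue is precisely the content of this corollary beyond Corollary 3.6.
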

\begin{proof} If $T$ is $B$-Fredholm then for $\lambda\neq 0$ small enough, $T-\lambda I$ is Fredholm and $ind(T)=ind(T-\lambda I)$.
As in the proof of \cite[Lemma 2.3, Lemma 2.4]{Y}, we see that $I-AC$ is Fredholm if and only if $I-BD$ is Fredholm and in this case, $ind(I-AC)=ind(I-BD)$. Therefore we complete the proof by
by Corollary 3.6.\end{proof}

\vskip10mm

\end{document}